\newtheorem{lemma}{Lemma}
\newtheorem{theorem}{Theorem}
\theoremstyle{definition}
\newtheorem{notation}{Notation}
\newtheorem{remark}{Remark}
\def\bigO{\mathcal{O}}
\def\mn{  \left[
    \begin{array}{c}
      s\\
      m
    \end{array}      
    \right]
}
\def\mna#1#2{  \left[
    \begin{array}{c}
      #1\\
      #2
    \end{array}      
    \right]
}
\def\deru#1{\if1#1
\frac{\partial      u(x)}{\partial x}
\else
\frac{\partial^{#1}      u(x)}{\partial x^{#1}}
\fi
}
\title{The maximal order of semidiscrete schemes for quasilinear first
order partial differential equations}
\author{A.~Baeza, P.~Mulet, D.~Zor\'{\i}o}
\thanks{This work has been financially supported by Spanish MINECO
  projects MTM2011-22741 and MTM2014-54388-P}
\address{antonio.baeza@uv.es,
mulet@uv.es,
david.zorio@uv.es,
Departament de Matem\`atica Aplicada, Universitat de
  Val\`encia, Spain}
\begin{document}
\maketitle
\begin{abstract}
 We prove that  a semidiscrete
  $(2r+1)$-point scheme 
  for quasilinear first order PDE cannot attain an order higher than
  $2r$. Moreover, if the forward 
  Euler fully discrete scheme obtained from the  linearization about
  any constant state of the semidiscrete  scheme is 
  stable, then the upper bound for the order of the scheme is
  $2r-1$. This bound is attained for a wide range of schemes and
  equations.  
\end{abstract}

\section{Introduction}
A basic strategy for obtaining high order numerical methods for
quasilinear first order partial differential equations $u_t=a(u) u_x$
consists in the \textit{method of lines}: the spatial term $a(u) u_x$
is approximated by means of high order finite differences 
\[
\big(a(u)u_x\big)(x_j, t)\approx \frac{H(v_{j-r}(t),\dots,v_{j+r}(t))}{h},
\]
for approximations $v_j(t)\approx u(x_j, t)$, where $h$ is the grid
step size and $x_j=x_0+jh$. The resulting ODE
\begin{equation}\label{eq:11}
v'_{j}(t)=\frac{H(v_{j-r}(t),\dots,v_{j+r}(t))}{h}
\end{equation}
is then solved by some ODE solver and the  fully discrete numerical
method thus obtained 
has an order which is the minimum of the orders of the finite
difference formula and the ODE solver.
Strong Stability Preserving
Runge-Kutta (SSPRK) ODE solvers (see \cite{GottliebShuTadmor2001} and
references 
therein) are widely used for the solution of \eqref{eq:11} due to its
nonlinear stability features. Since 
 the forward Euler scheme is the basic building block for SSPRK
 solvers, the stability of that scheme is crucial for the stability of
 high order  fully discrete  schemes obtained from the method of lines.

The problem of the maximal order that can be attained by a
semidiscrete scheme for first order partial differential equations is
considered  in \cite{JeltschStrack85} for linear schemes for the
linear advection equation. The authors of this work 
conclude that the maximal order of  linear schemes for which
\eqref{eq:11} is stable is $2r$. In \cite{Chin15} 
the author derives conditions for linear, explicit time-marching methods 
approximating the $m$-th order linear equation with constant coefficients
to any order.

In this work we deal with the maximal order that can be attained with
a general semidiscrete scheme for the first-order quasilinear
 partial differential equation. We
prove that the order of any such scheme is bounded above by
$2r$ and that if the order attains this bound then the forward Euler
scheme is unconditionally unstable. We also point out that finite
difference $(2r+1)$-point WENO schemes \cite{LOC94,JS96,ABBM11}
obtained from maximal order reconstructions do attain order $2r-1$.

\section{The maximal order of a semidiscrete scheme}
We start with two lemmas 
that establish the invertibility of some
Vandermonde-like matrices that appear in the analysis of the schemes. 
We then introduce some notation and establish 
a generalization of the chain rule known as Fa\`a di Bruno's formula
\cite{faadibruno1857} 
in Theorem \ref{theorem:fdb}, whose proof is included in an appendix, and we finally state our main result  
in Theorem \ref{theorem:1}.

\begin{lemma}\label{lemma:1}  
  Given $a_1,\dots,a_n$, then
  \[
  \det (a_{j}^{i})_{i,j=1}^{n}=\prod_{j=1}^{n}\prod_{k=j+1}^{n} (a_k-a_j)
  \prod_{j=1}^{n} a_j.
  \]
  If $a_1,\dots,a_n$ are pairwise distinct and not null
  then $\det (a_{j}^{i})_{i,j=1}^{n} \neq 0$.
\end{lemma}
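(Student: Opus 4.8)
The plan is to reduce the statement to the classical Vandermonde determinant by a single column rescaling. Note that the $j$-th column of the matrix $\big(a_j^{i}\big)_{i,j=1}^{n}$ is $\big(a_j, a_j^{2},\dots,a_j^{n}\big)^{\top} = a_j\cdot\big(1, a_j,\dots,a_j^{n-1}\big)^{\top}$, so extracting the scalar factor $a_j$ from column $j$, for each $j=1,\dots,n$, gives
\[
\det\big(a_j^{i}\big)_{i,j=1}^{n} = \Big(\prod_{j=1}^{n} a_j\Big)\,\det\big(a_j^{\,i-1}\big)_{i,j=1}^{n},
\]
where the remaining determinant is the standard Vandermonde determinant with rows indexed $i-1=0,\dots,n-1$. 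Thus everything hinges on the well-known identity $\det\big(a_j^{\,i-1}\big)_{i,j=1}^{n} = \prod_{1\le j<k\le n}(a_k-a_j)$, and the asserted formula follows upon rewriting this double product as $\prod_{j=1}^{n}\prod_{k=j+1}^{n}(a_k-a_j)$.

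For completeness I would include a short proof of the Vandermonde identity rather than merely citing it. The cleanest route is induction on $n$ together with a root-counting argument: regard $D_n(a_1,\dots,a_n) := \det\big(a_j^{\,i-1}\big)_{i,j=1}^{n}$ as a polynomial in the last variable $a_n$; it vanishes whenever $a_n$ coincides with some $a_j$, $j<n$ (two equal columns), hence it is divisible by $\prod_{j<n}(a_n-a_j)$; since $D_n$ has degree $n-1$ in $a_n$, it equals $c\cdot\prod_{j<n}(a_n-a_j)$ for a factor $c$ independent of $a_n$, and cofactor expansion along the last column identifies the coefficient of $a_n^{n-1}$ as $c = D_{n-1}(a_1,\dots,a_{n-1})$; the induction closes with the trivial base case $D_1=1$.

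Finally, for the last assertion: if $a_1,\dots,a_n$ are pairwise distinct then each factor $a_k-a_j$ with $j<k$ is nonzero, and if in addition none of them is null then each factor $a_j$ is nonzero, so the product on the right-hand side is nonzero and therefore so is the determinant. I do not expect any genuine obstacle here; the only point requiring a moment's care is the nonstandard row indexing $i=1,\dots,n$ (entries $a_j^{i}$ rather than $a_j^{i-1}$), which is precisely what accounts for the extra factor $\prod_{j=1}^{n} a_j$ compared with the usual Vandermonde formula.
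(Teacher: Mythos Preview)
Your proof is correct and follows the same approach as the paper: both reduce the determinant to the classical Vandermonde determinant $\det(a_j^{i-1})_{i,j=1}^n=\prod_{j<k}(a_k-a_j)$, with the column factorization you describe accounting for the extra factor $\prod_j a_j$. The paper's proof simply cites the Vandermonde formula, whereas you additionally spell out the factorization step and sketch a proof of the Vandermonde identity itself, but the underlying argument is identical.
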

\begin{proof}
  This is easily obtained from the fact that  the
  determinant of a Vandermonde matrix 
  $(a_j^{i-1})_{i,j=1}^{n}$ is given by
  $\det (a_{j}^{i-1})_{i,j=1}^{n}=\prod_{j=1}^{n}\prod_{k=j+1}^{n}
  (a_k-a_j)$.

\end{proof}
\begin{lemma}\label{lemma:2}  
   The determinant of the
  matrix given by the entries
  \[
  A(a_1,\dots,a_n)_{i,j}=
  \begin{cases}
    1&\text{if } i = 1\\
    a_j^{i}&\text{if } i \geq  2
  \end{cases},
  \]
  for any $a_1,\dots,a_n$, $n\geq 2$,
  is given by:
  \[
  \det A(a_1,\dots,a_n)=\prod_{j=1}^{n}\prod_{k=j+1}^{n} (a_k-a_j)
  \sum_{j=1}^{n} \prod_{k=1,k\neq j}^{n} a_k.
  \]
  If $a_1=0,a_2,\dots,a_n$ are pairwise distinct 
  then $\det  A(a_1,\dots,a_n) \neq 0$.
\end{lemma}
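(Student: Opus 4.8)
The plan is to realize $\det A(a_1,\dots,a_n)$ as a cofactor of a genuine (full) Vandermonde determinant in one extra variable, and then extract the identity by comparing coefficients of that extra variable. Concretely, introduce $t$ and consider the $(n+1)\times(n+1)$ matrix $V$ whose $(i,\ell)$ entry is $a_\ell^{i-1}$ for $\ell\le n$ and $t^{i-1}$ for $\ell=n+1$, with $i$ ranging over $1,\dots,n+1$. By the standard Vandermonde formula (as used in Lemma \ref{lemma:1}),
\[
\det V=\Big(\prod_{1\le j<k\le n}(a_k-a_j)\Big)\prod_{j=1}^{n}(t-a_j).
\]
Expanding $\det V$ along its last column gives $\det V=\sum_{i=1}^{n+1}(-1)^{i+n+1}t^{i-1}M_i$, where $M_i$ is the minor obtained by deleting row $i$ and column $n+1$. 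The key observation, which I would verify carefully, is that $M_2$ is exactly $\det A(a_1,\dots,a_n)$: deleting the row of first powers leaves rows with exponents $0,2,3,\dots,n$ over the columns $a_1,\dots,a_n$, which is precisely the matrix in the statement.

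Next I would compare the coefficient of $t^{1}$ on both sides. On the cofactor side this coefficient is $(-1)^{n+1}M_2=(-1)^{n+1}\det A$. On the product side, writing $\prod_{j=1}^n(t-a_j)=\sum_{k=0}^{n}(-1)^{n-k}e_{n-k}(a_1,\dots,a_n)\,t^{k}$ with $e_m$ the $m$-th elementary symmetric polynomial, the coefficient of $t^1$ is $(-1)^{n-1}e_{n-1}(a_1,\dots,a_n)$ times $\prod_{j<k}(a_k-a_j)$. Since $(-1)^{n+1}=(-1)^{n-1}$, equating yields
\[
\det A(a_1,\dots,a_n)=\Big(\prod_{1\le j<k\le n}(a_k-a_j)\Big)e_{n-1}(a_1,\dots,a_n),
\]
and $e_{n-1}(a_1,\dots,a_n)=\sum_{j=1}^{n}\prod_{k=1,k\neq j}^{n}a_k$, which is the claimed formula.

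Finally, for the non-vanishing claim, suppose $a_1=0$ and $0,a_2,\dots,a_n$ are pairwise distinct. Then every difference $a_k-a_j$ ($j<k$) is nonzero, so the Vandermonde factor is nonzero. In $e_{n-1}=\sum_{j}\prod_{k\neq j}a_k$ every summand with $j\neq 1$ contains the factor $a_1=0$ and hence vanishes, so $e_{n-1}=\prod_{k=2}^{n}a_k$; this product is nonzero because each $a_k$ with $k\ge 2$ is distinct from $a_1=0$. Hence $\det A\neq 0$. I expect the only delicate point to be bookkeeping the signs in the cofactor expansion and confirming that the minor $M_2$ is literally $\det A$; everything else is routine. (An alternative, more abstract route would be to observe directly that $\det A$ is antisymmetric, hence divisible by the Vandermonde, with quotient a symmetric polynomial of degree $n-1$ that is of degree at most one in each variable and therefore a scalar multiple of $e_{n-1}$, the scalar being pinned down by one coefficient; but the cofactor argument avoids invoking symmetric-function facts.)
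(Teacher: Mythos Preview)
Your argument is correct, and the bookkeeping you flagged as the only delicate point is fine: deleting row $2$ and column $n+1$ from the $(n+1)\times(n+1)$ Vandermonde with nodes $a_1,\dots,a_n,t$ does leave exactly $A(a_1,\dots,a_n)$, and the sign $(-1)^{n+3}=(-1)^{n-1}$ matches the coefficient of $t$ in $\prod_j(t-a_j)$.

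The paper takes a different route. It proceeds by induction on $n$: after eliminating the first column against the first row, the resulting $(n-1)\times(n-1)$ determinant has entries $a_{j+1}^{i+1}-a_1^{i+1}$, which factor as $(a_{j+1}-a_1)$ times a geometric sum; a further row operation reduces this to a determinant whose first row is $a_1+a_{j+1}$ and whose remaining rows are pure powers, and splitting along that first row produces one piece handled by the induction hypothesis and another handled by Lemma~\ref{lemma:1}. Your approach is shorter and more conceptual---it explains in one stroke why the symmetric factor is $e_{n-1}$, since that is literally the coefficient of $t$ in $\prod_j(t-a_j)$---at the price of invoking cofactor expansion and the elementary symmetric polynomial expansion rather than bare row manipulations. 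The paper's approach is more hands-on and self-contained but requires tracking the inductive algebra. Both proofs yield the non-vanishing statement by the same specialization $a_1=0$.
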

\begin{proof}
  The result is proved by induction on $n$. The result for $n=2$ is
  \begin{equation*}
    \det
    \begin{bmatrix}
      1&1\\
      a_1^2&a_2^2
    \end{bmatrix}
    =(a_2-a_1)(a_1+a_2),
  \end{equation*}
  which  clearly holds true. Assume now $n>2$ and the result to be
  true for $n-1$. We do 
  elimination in the first column, by  subtracting from row $i+1$  the
  first row multiplied by
  $a_{1}^{i+1}$, $i=1,\dots,n-1$. This   yields
  \begin{equation}\label{eq:460}
    \begin{split}
  \det
  A(a_1,\dots,a_n)&=\det(a_{j+1}^{i+1}-a_1^{i+1})_{i,j=1}^{n-1}
  =\det\left(\left(a_{j+1}-a_1\right)
    \left(\sum_{k=0}^{i}a_{j+1}^{k}a_1^{i-k}\right)\right)_{i,j=1}^{n-1}\\
  &= \prod _{j=2}^{n}(a_{j}-a_1)
  \det\left(\sum_{k=0}^{i}a_{j+1}^{k}a_1^{i-k}\right)_{i,j=1}^{n-1}.
  \end{split}
\end{equation}
Subtracting to row $i$ of the  matrix
\begin{equation*}
  \left(\sum_{k=0}^{i}a_{j+1}^{k}a_1^{i-k}\right)_{i,j=1}^{n-1}
\end{equation*}  
its row $i-1$ multiplied by $a_1$, since
\begin{equation*}
  \sum_{k=0}^{i}a_{j+1}^{k}a_1^{i-k}-a_1\sum_{k=0}^{i-1}a_{j+1}^{k}a_1^{i-1-k}=
  \sum_{k=0}^{i}a_{j+1}^{k}a_1^{i-k}-\sum_{k=0}^{i-1}a_{j+1}^{k}a_1^{i-k}
  =a_{j+1}^i,
\end{equation*}  
we get
$$\det\left(\sum_{k=0}^{i}a_{j+1}^{k}a_1^{i-k}\right)_{i,j=1}^{n-1}
=\det(b_{i,j})_{i,j=1}^{n-1},$$
where
$$b_{i,j}=\begin{cases}
    a_1+a_{j+1} & \mbox{if } i=1\\
    a_{j+1}^i & \mbox{if } 2\leq i\leq n-1.
    \end{cases}$$
Therefore, we can split this determinant into the sum of two
determinants:
$$\det(b_{i,j})_{i,j=1}^{n-1}=a_1\det(A(a_2,\ldots,a_{n-1}))
+\det((a_{j+1}^i))_{i,j=1}^{n-1}.$$
Now, using \eqref{eq:460} and the induction hypothesis for the
determinant of the first 
summand and applying Lemma \ref{lemma:1} for the determinant of the
second one:
\begin{equation*}
  \begin{split}
    \det A(a_1,\dots,a_n)=&\prod _{j=2}^{n}(a_{j}-a_1)
  \left[a_1\prod_{j=2}^{n}\prod_{k=j+1}^{n} (a_k-a_j)
  \sum_{j=2}^{n} \prod_{k=2,k\neq j}^{n} a_k\right.\\
  +&\left.\prod_{j=2}^{n}\prod_{k=j+1}^{n}(a_k-a_j)\prod_{j=2}^{n}
  a_j\right]=\prod_{j=1}^{n}\prod_{k=j+1}^{n} (a_k-a_j)
  \sum_{j=1}^{n} \prod_{k=1,k\neq j}^{n} a_k.
  \end{split}
\end{equation*}
\normalsize
Finally, if $a_1=0,a_2,\dots,a_n$ are pairwise distinct, then:
\[
\det A(a_1,\dots,a_n)
=
\prod_{j=2}^{n}\prod_{k=j+1}^{n} (a_k-a_j)
\prod_{j=2}^{n} a_j^2\neq 0.
\]

\end{proof}  

\begin{notation}
Denote by $\mathcal{M}(s, n)$ the vector space of multilinear
functions ($s$-order --covariant-- tensors) 
\begin{equation*}
  T\colon \left(\mathbb R^{n}\right)^{s}
  \to \mathbb R, \quad \left(\mathbb R^{n}\right)^{s}=\overbrace{\mathbb{R}^n\times\cdots\times\mathbb{R}^n}^{s}.
\end{equation*}
Since $\left(\mathbb R^{n}\right)^{s}$ is
isomorphic to the vector space of $n\times s$ matrices, we can regard
$s$-order tensors as acting on the columns of $n\times s$ matrices. Tensors
can be characterized as $\overbrace{n\times \dots\times n}^{s}$ matrices
$(T_{i_1,\dots,i_s})$, i.e., their action on an $n\times s$ matrix
$A$ is given by
\begin{equation*}
  T(A)=\sum_{i_1=\dots=i_s=1}^{n} T_{i_1,\dots,i_s} A_{i_1,1}\dots
  A_{i_s,s}.
\end{equation*}  

Assume $T\colon \mathbb R^n \to \mathcal{M}(s, n)$ is differentiable
(equivalently, $T_{i_{1},\dots,i_{i_s}}$ are differentiable). Then
$T'(u)\in \mathcal{M}(s+1, n)$ is given by:
\begin{equation*}
  T'(u)_{i_0,i_1,\dots,i_s}=\frac{\partial T_{i_1,\dots,i_s}}{\partial
  u_{i_0}}(u).
\end{equation*}

With this notation  the derivatives of real functions can be mapped
to tensors in the following way:
If $f\colon\mathbb R^n\to \mathbb R$ is in $\mathcal{C}^s$, then we
define $f^{(k)}\in\mathcal{M}(k, n)$, $1\leq k\leq s$ by
\begin{equation*}
  f^{(k)}_{i_1,\dots,i_k}(u)=\frac{\partial^k }{\partial
    u_{i_1}\dots\partial u_{i_k}}f(u).
\end{equation*}
\end{notation}

\begin{notation}
For $s\in\mathbb N$, we denote
\begin{align*}
  \mathcal{P}_{s}&=\{ m\in\mathbb N^{s} /
\sum_{j=1}^{s} j m_j=s \},
\end{align*}  
and  for $m\in\mathbb N^{s}$,
$|m|=\sum_{j=1}^{s} m_j$. For $m\in\mathcal{P}_{s}$, we denote
\begin{equation*}
\mn    =\frac{s!}{m_1!\dots m_{s}!}
  \end{equation*}
  and for $m\in\mathbb N^s$ and a function $u\colon \mathbb
  R\to\mathbb R^{n}$, the $n\times|m|$ matrix $D^m u(x)$ is given
  column-wise by 
  \begin{equation}\label{eq:Dm}
    \begin{aligned}
    (D^m u(x))_{\sum\limits_{l<j}m_l+k}&=\frac{1}{j!}\frac{\partial^{j}
      u(x)}{\partial x^j}, \quad k=1,\dots,m_j,\quad j=1,\dots,s,\\
    D^m u(x)&=\begin{bmatrix}
      \overbrace{\frac{\deru1}{1!} \dots \frac{\deru1}{1!}}^{m_1} &
      \dots
      \overbrace{\frac{\deru{s}}{s!} \dots \frac{\deru{s}}{s!}}^{m_s}
    \end{bmatrix},
  \end{aligned}    
\end{equation}    
where it is to be understood here that $j$-th order derivatives do not
appear in $D^{m}u$ if $m_j=0$.
\end{notation}

We state the generalized chain rule for high order derivatives of
compositions of functions due to Fa\`a di Bruno
\cite{faadibruno1857}. 
  
\begin{theorem}[Fa\`a di Bruno's formula]\label{theorem:fdb}
Let $f\colon \mathbb R^n\to R$, $u\colon \mathbb R\to \mathbb R^n$ be
$s$ times continuously differentiable. Then
\begin{equation}\label{eq:fdb}
  \frac{d^sf(u(x))}{dx^s} =
  \sum_{m\in\mathcal{P}_s} \mn f^{(|m|)}(u(x)) D^m u(x)
\end{equation}
\end{theorem}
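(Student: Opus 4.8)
The plan is to prove Fa\`a di Bruno's formula \eqref{eq:fdb} by induction on $s$. For $s=1$ the statement reduces to the ordinary chain rule: $\mathcal{P}_1=\{(1)\}$, $\mn = 1$, $D^{(1)}u(x) = \deru1$, and $f^{(1)}(u(x))\deru1 = \sum_{i_1} \frac{\partial f}{\partial u_{i_1}}(u(x))\frac{\partial u_{i_1}}{\partial x}$, which is exactly $\frac{d}{dx}f(u(x))$. This base case just requires unwinding the tensor notation introduced above.

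For the inductive step, I would assume \eqref{eq:fdb} holds for some $s$ and differentiate both sides once more with respect to $x$. On the left we get $\frac{d^{s+1}f(u(x))}{dx^{s+1}}$. On the right, for each term $\mna{s}{m} f^{(|m|)}(u(x)) D^m u(x)$ with $m\in\mathcal{P}_s$, the product rule produces two kinds of contributions: (i) differentiating the tensor $f^{(|m|)}(u(x))$, which by the chain rule and the definition of $T'$ in the notation yields $f^{(|m|+1)}(u(x))$ contracted against $\deru1$ in a new slot — this effectively replaces $m$ by $m + e_1$ (one more first-order derivative); and (ii) differentiating one of the $|m|$ columns of $D^m u(x)$, where a column holding $\frac{1}{j!}\deru{j}$ becomes, after differentiation, $\frac{1}{j!}\deru{j+1} = \frac{j+1}{(j+1)!}\deru{j+1}$, which corresponds to replacing one unit of $m_j$ by a unit of $m_{j+1}$, i.e. $m \mapsto m - e_j + e_{j+1}$. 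In all cases $\sum_l l m_l$ increases by exactly $1$, so every resulting multi-index lies in $\mathcal{P}_{s+1}$, confirming the index set is correct.

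The heart of the argument is the bookkeeping of coefficients: I must show that when all these contributions are collected, the total coefficient of $f^{(|\widetilde m|)}(u(x)) D^{\widetilde m} u(x)$ for a fixed $\widetilde m\in\mathcal{P}_{s+1}$ equals $\mna{s+1}{\widetilde m} = \frac{(s+1)!}{\widetilde m_1!\cdots \widetilde m_{s+1}!}$. A given $\widetilde m$ is reached from $m = \widetilde m - e_1$ via route (i) with multiplicity $1$ (weighted by $\mna{s}{\widetilde m - e_1}$), and from $m = \widetilde m + e_j - e_{j+1}$ via route (ii) for each $j$ with $\widetilde m_{j+1}\geq 1$; in route (ii), since $D^m u$ has $\widetilde m_j + 1$ identical columns equal to $\frac{1}{j!}\deru j$, differentiating any one of them gives the same result, contributing a combinatorial factor of $(m_j) = \widetilde m_j + 1$, together with the factor $(j+1)$ from $\frac{1}{j!}\deru{j+1} = \frac{j+1}{(j+1)!}\deru{j+1}$ and the weight $\mna{s}{m}$. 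So the identity to verify is
\begin{equation*}
  \mna{s+1}{\widetilde m} = \mna{s}{\widetilde m - e_1}
  + \sum_{j\colon \widetilde m_{j+1}\geq 1} (j+1)(\widetilde m_j + 1)\,\mna{s}{\widetilde m + e_j - e_{j+1}},
\end{equation*}
with the convention that terms with a negative component are dropped. Substituting the definitions and clearing the common factor, this reduces to the elementary combinatorial identity $s+1 = \widetilde m_1 + \sum_{j\geq 1} (j+1)\widetilde m_{j+1} = \sum_{l\geq 1} l\,\widetilde m_l$, which holds precisely because $\widetilde m\in\mathcal{P}_{s+1}$.

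The main obstacle I anticipate is purely notational rather than conceptual: differentiating the tensor-valued expression $f^{(|m|)}(u(x)) D^m u(x)$ cleanly requires care with the ordering of the columns of $D^m u(x)$ (as fixed in \eqref{eq:Dm}) and with the fact that permuting columns of $D^m u$ does not change the value since the corresponding derivatives repeat. Making the "differentiate one column" step rigorous — identifying the multiplicity $m_j$ of equal columns and checking the $\frac{1}{j!}\to\frac{j+1}{(j+1)!}$ rescaling — is where the bulk of the careful writing goes, but no deep idea is needed. Since the excerpt defers this proof to an appendix, I would present it there at this level of detail, with the coefficient identity $\sum_l l\,\widetilde m_l = s+1$ highlighted as the crux.
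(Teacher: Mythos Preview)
Your proposal is correct and follows essentially the same route as the paper's appendix: induction on $s$ with base case the chain rule, then differentiate each term to obtain the ``add a first-derivative column'' contribution (the paper's map $S_0$) and the ``promote a $j$-th derivative column to a $(j{+}1)$-th'' contributions (the paper's maps $S_j$), and finally verify that the collected coefficients satisfy $\mna{s+1}{\widetilde m}$ via the identity $\sum_l l\,\widetilde m_l = s+1$. The only difference is packaging: the paper isolates the product-rule step as Lemma~\ref{lemma:3} and encodes the column-promotion and multiplicity bookkeeping through explicit maps $S_j$ and a permutation/diagonal matrix factorization $d_j D^m u = D^{S_j(m)} u\,P E$, which is precisely the ``careful writing'' you anticipate for handling the repeated columns and the $\frac{1}{j!}\to\frac{j+1}{(j+1)!}$ rescaling.
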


\begin{theorem} \label{theorem:1}
Consider the   semidiscrete scheme \eqref{eq:11},
with $H$ a smooth function, 
for the approximate solution of $u_t=a(u)u_x$, with $v_j(t)\approx
u(x_j, t)$. Then the order $p$ of \eqref{eq:11} satisfies  $p\leq 2r$ and if 
$p=2r$ then the forward Euler scheme applied to the linearized
scheme about any constant state is unconditionally unstable.
\end{theorem}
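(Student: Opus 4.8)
The plan is to reduce the statement to the classical linear theory by linearizing the semidiscrete scheme about an arbitrary constant state. Order $p$ of \eqref{eq:11} means that for every smooth function $u=u(x)$ and every node $x_0$,
\[
\frac1h H\bigl(u(x_0-rh),\dots,u(x_0+rh)\bigr)=a(u(x_0))\,u_x(x_0)+\bigO(h^{p}).
\]
Taking $u\equiv\bar u$ constant forces $H(\bar u,\dots,\bar u)=0$ for every $\bar u$. Now substitute $u=\bar u+\varepsilon\tilde u$ for an arbitrary smooth $\tilde u$ and expand the consistency relation in $\varepsilon$: the $\varepsilon^0$ term is absent and the $\varepsilon^1$ term gives
\[
\frac1h\sum_{k=-r}^{r}\beta_k\,\tilde u(x_0+kh)=a(\bar u)\,\tilde u_x(x_0)+\bigO(h^{p}),\qquad \beta_k=\frac{\partial H}{\partial u_k}(\bar u,\dots,\bar u),
\]
i.e. the linear $(2r+1)$-point scheme $w'_j(t)=\frac1h\sum_k\beta_k w_{j+k}(t)$ is consistent to order $p$ with $u_t=a(\bar u)u_x$. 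The one delicate point is that the $\bigO(h^p)$ must be \emph{uniform} in $\varepsilon$: writing the local truncation error of \eqref{eq:11} on $\bar u+\varepsilon\tilde u$ as a power series in $h$ whose coefficients are smooth functions of $\varepsilon$, each coefficient up to order $p-1$ vanishes identically in $\varepsilon$, hence so does its $\varepsilon$-derivative at $\varepsilon=0$. (Equivalently, one may read the order conditions directly off Fa\`a di Bruno's formula, Theorem \ref{theorem:fdb}, applied to $h\mapsto H(u(x_0-rh),\dots,u(x_0+rh))$, organizing the expansion so that only first derivatives of $H$ survive at leading order.)

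Taylor-expanding $\tilde u(x_0+kh)$ shows that order $p$ of the linearized scheme is equivalent to $\sum_k\beta_k=0$, $\sum_k k\beta_k=a(\bar u)$ and $\sum_k k^{\,l}\beta_k=0$ for $2\le l\le p$. Choose $\bar u$ with $a(\bar u)\neq0$ (if $a\equiv0$ there is nothing to prove). If $p\ge 2r+1$, the $2r$ homogeneous equations $\sum_{k\neq0}k^{\,l}\beta_k=0$, $l=2,\dots,2r+1$, have coefficient matrix $(k^{\,l})$ over the $2r$ pairwise distinct nonzero nodes $k\in\{-r,\dots,-1,1,\dots,r\}$; pulling the common factor $\prod_{k\neq0}k^2$ out of the columns and invoking Lemma \ref{lemma:1}, this matrix is invertible, so $\beta_k=0$ for all $k\neq0$, contradicting $\sum_k k\beta_k=a(\bar u)\neq0$. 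Hence $p\le 2r$.

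For $p=2r$ the $2r+1$ conditions at $l=0,\dots,2r$ form a square Vandermonde system in the distinct nodes $-r,\dots,r$ (Lemma \ref{lemma:1}), so $\beta$ is uniquely determined. Solving instead only the odd-$l$ conditions $l=1,3,\dots,2r-1$ for $\beta_1,\dots,\beta_r$ — a nonsingular Vandermonde-type system in $k^2$, again by Lemma \ref{lemma:1} — and setting $\beta_0=0$, $\beta_{-k}=-\beta_k$, produces a vector satisfying all $2r+1$ conditions; by uniqueness $\beta$ is this antisymmetric vector. Then every even-order moment $\sum_k k^{\,l}\beta_k$ vanishes, so the Fourier symbol $\widehat\beta(\theta)=\sum_k\beta_k e^{\mathrm ik\theta}=2\mathrm i\sum_{k=1}^{r}\beta_k\sin(k\theta)$ is purely imaginary. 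The forward Euler discretization $w^{n+1}_j=w^n_j+\frac{\Delta t}{h}\sum_k\beta_k w^n_{j+k}$ of the linearized scheme then has amplification factor $g(\theta)=1+\frac{\Delta t}{h}\widehat\beta(\theta)$, whence
\[
|g(\theta)|^2=1+\Bigl(\tfrac{\Delta t}{h}\Bigr)^{2}\,|\widehat\beta(\theta)|^{2}\ \ge\ 1,
\]
with strict inequality whenever $\widehat\beta(\theta)\neq0$. Since $\widehat\beta{}'(0)=\mathrm i\sum_k k\beta_k=\mathrm i\,a(\bar u)\neq0$, $\widehat\beta$ is not identically zero, so for every $\Delta t/h>0$ some mode is amplified: the forward Euler scheme is unconditionally unstable.

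The main obstacle is the linearization step: transferring order $p$ from the nonlinear scheme to its linearization requires controlling the $h$-expansion of the local error uniformly in the linearization parameter (or suitably grouping terms in the Fa\`a di Bruno expansion). The rest is Vandermonde bookkeeping provided by Lemmas \ref{lemma:1}--\ref{lemma:2} together with a one-line von Neumann computation; in the system-valued setting one replaces scalars by the matrix symbol $\widehat\beta(\theta)$ and argues with its eigenvalues, which is where a nondegeneracy hypothesis on $a(\bar u)$ genuinely enters.
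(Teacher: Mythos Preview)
Your argument is correct and shares the paper's overall arc: obtain the moment conditions $\sum_k k^{l}\beta_k=\delta_{l,1}a(\bar u)$ on $\beta_k=\partial_{u_k}H(\bar u,\dots,\bar u)$, use Vandermonde invertibility to bound $p$ and to force antisymmetry when $p=2r$, then read off instability from the purely imaginary symbol. The execution differs at each stage. To reach the moment conditions the paper applies Fa\`a di Bruno's formula \eqref{eq:fdbH} to $\Phi(h)=H(u(x{-}rh),\dots,u(x{+}rh))$ and inserts the monomial test functions $u(y)=v+y^{s}/s!$, which annihilate every term of the expansion except the one carrying first partials of $H$; your $\varepsilon$-linearization achieves the same isolation more directly and bypasses the tensor bookkeeping. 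For $p>2r$ the paper works with the $(2r{+}1)$-row system indexed by $l=0,2,\dots,2r{+}1$ and invokes Lemma~\ref{lemma:2}, whereas you drop the node $k=0$ and use the $2r$-row block $l=2,\dots,2r{+}1$, which needs only Lemma~\ref{lemma:1}. For the antisymmetry at $p=2r$ the paper pairs the even rows $l=2,4,\dots,2r$ to obtain $\beta_l+\beta_{-l}=0$ directly (again Lemma~\ref{lemma:1}), while you argue via uniqueness of the full square Vandermonde system together with an explicit antisymmetric solution built from the odd rows. Your route is a bit leaner---in particular Lemma~\ref{lemma:2} is never needed---while the paper's monomial device reads the conditions straight off \eqref{eq:fdbH} without the $\varepsilon$-differentiation that you correctly flag as the one step requiring care.
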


\begin{proof}
We drop the dependence on $t$ until it is required. By the definition of the order of \eqref{eq:11}, for any smooth function $u$
and any $x$ we get:
\begin{equation}
  \label{eq:1}
  \frac{H(u(x-rh),\dots,u(x+rh))}{h}=a(u(x)) u'(x)+\bigO(h^p).
\end{equation}
Let us fix $u, x$ and  denote $u_j(h)=u(x+jh)$, $U(h)=(u_{-r}(h),
\dots,u_r(h))$ and $\Phi(h)=H(U(h))$. 
Then  \eqref{eq:1} and the Taylor development of $\Phi$ about $0$  yield:
\begin{equation}\label{eq:0}
\Phi^{(s)}(0)=\Phi_{u,x}^{(s)}(0)=\delta_{s,1}a(u(x))u'(x),\quad
\forall s=0,\dots,p,\quad \forall u, x.
\end{equation}
Fa\`a di Bruno's formula \eqref{eq:fdb} yields
\begin{equation}\label{eq:fdbH}
\Phi^{(s)}(0)= \sum_{m\in\mathcal{P}_s}
\mn H^{(|m|)}(U(0))D^mU(0),\quad U=U_{u,x},
\end{equation}
for any $u$ and $x$ 

For fixed $s\in\mathbb N$ and $v\in\mathbb R$, we consider in  \eqref{eq:fdbH} $x=0$,
$u(y)=v+\frac{y^{s}}{s!}$, which verifies $u^{(l)}(0)=0$,
$l=1,\dots,s-1$, $u^{(s)}(0)=1$.   Then, for
$m\in\mathcal{P}_s$ such that $m_s=0$, one has $D^{m}U(0)=0$, since the
columns of this matrix are $0=u^{(j)}(0)/j!$ for some $j\in\{1,\dots, s-1\}$. On the other
hand, for $m=(0,\dots,0,1)\in\mathbb{N}^s$, $D^m U(0)$ is a one column
matrix given by
\begin{equation*}
  D^{(0,\dots,0,1)}U(0)=\frac{1}{s!}U^{(s)}(0)
  =\frac{1}{s!}
  \begin{bmatrix}
    u^{(s)}_{-r}(0)\\
    \vdots\\
    u^{(s)}_{r}(0)
  \end{bmatrix}
  =
  \frac{1}{s!}u^{(s)}(x)
  \begin{bmatrix}
    (-r)^s\\
    \vdots\\
    r^s
  \end{bmatrix}    
  =
  \frac{1}{s!}
  \begin{bmatrix}
    (-r)^s\\
    \vdots\\
    r^s
  \end{bmatrix}    
\end{equation*}

Therefore, 
\begin{equation}\label{eq:444}
  \Phi^{(s)}(0)= \mna{s}{e_s} H^{(|e_s|)}(U(0))D^{e_s}U(0)
  =
  \sum_{l=-r}^{r} \frac{\partial H}{\partial
    u_{l}}(v,\dots,v)l^s. 
\end{equation}
From \eqref{eq:444} for $s=1$ and \eqref{eq:0} applied to $u(y)=v+y$
we obtain
\[
\Phi'(0)=\sum_{l=-r}^{r}l\frac{\partial H}{\partial
  u_{l}}(v,\dots,v)=a(v),
\]
for any $v\in\mathbb R$.

On the other hand, equation \eqref{eq:0} for $s=0$  reads as $0=\Phi(0)=H(v,\dots,
v)$ for any $v$$(=u(x))$, which in turn yields that 
\begin{equation}\label{eq:447}
  \sum_{l=-r}^{r}\frac{\partial H}{\partial u_l}(v, \dots, v)=0,
\end{equation}
for any $v$.

Recapping, we have obtained the following equations:
\begin{equation}\label{eq:446}
  \delta_{k,1}a(v)=\sum_{l=-r}^{r}l^k\frac{\partial H}{\partial u_l}(v,\dots,v),\quad
  k=0,\dots,p, \quad\forall v.
\end{equation}
Now, if $p>2r$, then
\begin{equation}\label{eq:445}
  0=\sum_{l=-r}^{r}l^k\frac{\partial H}{\partial u_l}(v,\dots,v),\quad
  k=0,\dots,2r+1, k\neq 1,
\end{equation}
i.e., $\big(\frac{\partial H}{\partial u_{-r}}(v,\dots,v),\dots
\frac{\partial H}{\partial u_{r}}(v,\dots,v)\big)$ is a solution of
system \eqref{eq:445}, whose matrix, with the notation of Lemma
\ref{lemma:2}, is  $A(-r, \dots, 0, \dots, r)$ and satisfies
\begin{equation*}
  \det A(-r, \dots,
0, \dots, r)\neq 0,
\end{equation*}
so the unique
solution of system \eqref{eq:445} is the trivial one, i.e.,
\begin{equation*}
  \frac{\partial H}{\partial u_l}(v,\dots,v)=0, \quad
l=-r,\dots,r, \forall v,
\end{equation*}
which contradicts  the fact that, from \eqref{eq:446} for $s=1$,
$\sum_{l=-r}^{r}l\frac{\partial H}{\partial u_l}(v,\dots,v)=a(v)$  for
generic 
$a$ and $v$. Therefore $p\leq 2r$.

If $p=2r$ then
\[
\sum_{l=1}^{r}l^{2k}(\frac{\partial H}{\partial u_l}(v,\dots,v)+\frac{\partial H}{\partial u_{-l}}(v,\dots,v))=0,\quad k=1,\dots,r,
\]
and this, after Lemma \ref{lemma:1}, gives
\begin{equation}\label{eq:470}
\frac{\partial H}{\partial u_l}(v,\dots,v)+\frac{\partial H}{\partial
  u_{-l}}(v,\dots,v)=0, \quad l=1,\dots,r.
\end{equation}
On the other hand, since
$\sum_{l=-r}^{r} \frac{\partial H}{\partial u_l}(v,\dots,v)=0$ by
\eqref{eq:447}, we get from \eqref{eq:470}
\begin{equation}\label{eq:461}
\frac{\partial H}{\partial u_0}(v,\dots,v)=0.
\end{equation}

If $\bar v$ is any constant, since $H(\bar v, \dots, \bar v)=0$, then
the linearized scheme for $v_j(t)=\bar v+w_j(t)$ reads as:
\begin{equation*}
w'_j=\frac{\frac{\partial H}{\partial_{u_{-r}}}(v,\dots, \bar
  v)w_{j-r}+\dots+\frac{\partial H}{\partial_{u_{r}}}(v,\dots,
  v)w_{j+r}}{h},
\end{equation*}
which, by \eqref{eq:460} and \eqref{eq:461} can be written as
\begin{equation}\label{eq:448}
w'_j=\frac{\sum_{l=1}^{r}\alpha_{l}(w_{j+l}-w_{j-l})}{h},
\end{equation}
where $\alpha_l=\frac{\partial H}{\partial u_l}(\bar v,\dots, \bar   v)$.  
The forward Euler scheme applied to the  system of ODE \eqref{eq:448} is:
\[
w_j^{n+1}=w_j^{n}+\frac{k}{h}\sum_{l=1}^{r}\alpha_l(w^n_{j+l}-w^n_{j-l}),
  \]
or, in linear operator form, $w^{n+1}=\Psi_{k/h} w^n$. The   Fourier
transform of $\Psi_{k/h}$  is: 
\[
\widehat{\Psi}_{k/h}(\theta)=1+2i\frac{k}{h}\sum_{l=1}^{r}\alpha_l
\sin(\theta l), i=\sqrt{-1},
\]
which satisfies $|\widehat{\Psi}_{k/h}(\theta)|>1$ for any $k/h$ and
some $\theta$, that
is, the forward Euler scheme for the system of ODE \eqref{eq:448} is
unstable for any $k/h$.

\end{proof}

\begin{remark} The Lax-Wendroff scheme is a second order 3-point
scheme for conservation laws. Nevertheless, this  is  not a
contradiction to  Theorem \ref{theorem:1}, for this scheme is fully discrete. On the other
hand,  $(2r+1)$-point 
smooth and stable schemes of order $2r-1$ for conservation laws can be
obtained, for example, via finite difference WENO schemes with global Lax-Friedrichs
flux splittings \cite{Shu09,ABBM11}. 
\end{remark}

\appendix

\section{}

For the sake of completeness, we include in this appendix a proof of
Theorem \ref{theorem:fdb}, for we have not found satisfactory references
for its proof.

The following result is easily established.
\begin{lemma}\label{lemma:3}
Assume $T\colon \mathbb R^n \to \mathcal{M}(s, n)$ is differentiable
(equivalently, $T_{i_{1},\dots,i_{i_s}}$ are differentiable) and that
$A\colon \mathbb R\to \mathbb R^{n\times s}$,
$u:\mathbb{R}\rightarrow\mathbb{R}^n$ are also differentiable.
Then, $\forall x\in\mathbb{R}$
\begin{equation*}
  \frac{d}{dx}T(u(x))A(x)=T'(u(x))[u'(x)\ A(x)]+T(u(x))\sum_{j=1}^{s} d_j A(x),
\end{equation*}  
where we have used the notation
$d_j A(x)$ for the $n\times s$ matrix given by the columns:
\begin{equation*}
  (d_j A(x))_{k}=\begin{cases}
    A_{k}(x) & k\neq j\\
    A'_{j}(x) & k= j
  \end{cases}    
\end{equation*}
\end{lemma}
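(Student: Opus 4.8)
The plan is to reduce the identity to the ordinary scalar Leibniz rule by expanding both sides in components. By the definition of the tensor action, the scalar function to be differentiated is
\begin{equation*}
T(u(x))A(x)=\sum_{i_1,\dots,i_s=1}^n T_{i_1,\dots,i_s}(u(x))\,A_{i_1,1}(x)\cdots A_{i_s,s}(x),
\end{equation*}
a finite sum of products of the $s+1$ scalar functions $x\mapsto T_{i_1,\dots,i_s}(u(x))$ and $x\mapsto A_{i_k,k}(x)$. Since the sum is finite, differentiation commutes with it, so I would differentiate a single summand by the classical product rule:
\begin{align*}
\frac{d}{dx}\Big(T_{i_1,\dots,i_s}(u(x))\prod_{k=1}^s A_{i_k,k}(x)\Big)
&=\Big(\frac{d}{dx}T_{i_1,\dots,i_s}(u(x))\Big)\prod_{k=1}^s A_{i_k,k}(x)\\
&\quad+T_{i_1,\dots,i_s}(u(x))\sum_{j=1}^s A'_{i_j,j}(x)\prod_{k\neq j}A_{i_k,k}(x),
\end{align*}
and then sum over $i_1,\dots,i_s$, treating the two groups of terms separately.

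For the first group, the chain rule together with the definition of $T'$ gives $\frac{d}{dx}T_{i_1,\dots,i_s}(u(x))=\sum_{i_0=1}^n T'(u(x))_{i_0,i_1,\dots,i_s}\,u'_{i_0}(x)$. Summing the product of this with $\prod_{k=1}^s A_{i_k,k}(x)$ over all of $i_0,i_1,\dots,i_s$ is, by the definition of the action of the $(s+1)$-tensor $T'(u(x))$, exactly $T'(u(x))[u'(x)\ A(x)]$: the new index $i_0$ is contracted against the leading column $u'(x)$ and the indices $i_1,\dots,i_s$ against the columns of $A(x)$.

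For the second group, interchanging the sums over $i_1,\dots,i_s$ and over $j$ yields $\sum_{j=1}^s T(u(x))(d_j A(x))$, because in the $j$-th inner sum the factor $A_{i_j,j}$ is replaced by $A'_{i_j,j}$ while every other column factor is unchanged, which is precisely the column description of $d_j A(x)$. Adding the two groups reproduces the asserted identity. I do not expect any genuine obstacle: the statement is a bookkeeping exercise, and the only points requiring care are matching the chain-rule summation index $i_0$ with the zeroth slot of $T'$ and verifying that the single-column substitution arising in the product rule coincides with the matrices $d_j A(x)$.
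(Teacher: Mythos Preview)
Your proposal is correct: the identity is precisely the scalar Leibniz rule applied termwise to the finite sum $\sum_{i_1,\dots,i_s} T_{i_1,\dots,i_s}(u(x))\prod_k A_{i_k,k}(x)$, with the chain-rule term repackaged as $T'(u(x))[u'(x)\ A(x)]$ and the $s$ column-derivative terms as $T(u(x))\sum_j d_jA(x)$. The paper itself does not prove this lemma, merely remarking that it ``is easily established,'' and your component-wise verification is exactly the natural argument being alluded to.
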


\begin{notation}
  We introduce some further notation for the proof of Theorem
  \ref{theorem:fdb}. 
For $s\in\mathbb N$, we denote
\begin{align*}
  \mathcal{P}_{s,j}&=\{ m\in\mathcal{P}_s /
m_j\neq 0 \}.
\end{align*}  
We denote also
\begin{equation*}
  S_0\colon \mathcal{P}_{s}\to \mathcal{P}_{s+1,1},\quad
  S_0(m)_k=
  \begin{cases}
    0 & k=s+1\\
    m_k& s\geq k\neq 1\\
    m_{1}+1 & k=1,
  \end{cases}    
\end{equation*}  
\begin{equation*}
  S_j\colon \mathcal{P}_{s,j}\to \mathcal{P}_{s+1,j+1},\quad
  S_j(m)_k=
  \begin{cases}
    0 & k=s+1\\
    m_k& s\geq k\neq j, j+1\\
    m_{j}-1 &s\geq  k=j\\
    m_{j+1}+1 &s\geq  k=j+1.
  \end{cases}    
\end{equation*}
for $1\leq j<s$,  and $S_s$ that maps $(0,\dots,0,1)\in\mathbb N^s$ to $(0,\dots,0,1)\in\mathbb N^{s+1}$.
\end{notation}

\begin{proof}{(of Theorem \ref{theorem:fdb})}
We use induction on $s$, the case $s=1$ being the chain rule. By the
induction hypothesis for $s$ and Lemma
\ref{lemma:3} we deduce:
\begin{gather*}
  \frac{d^{s+1}f(u(x))}{dx^{s+1}} =
  \sum_{m\in\mathcal{P}_s} \mn \frac{d}{dx}\left(f^{(|m|)}(u(x)) D^m u(x)\right)\\
  =\sum_{m\in\mathcal{P}_s} \mn \big((f^{(|m|)})'(u(x)) [u'(x)\ D^m u(x)]+
  f^{(|m|)}(u(x)) \sum_{j=1}^{n}d_j D^m u(x)\big)\\
  =\sum_{m\in\mathcal{P}_s} \mn \big(f^{(|m|+1)}(u(x)) [u'(x)\ D^m u(x)]+
  f^{(|m|)}(u(x)) \sum_{j=1}^{n}d_jD^m u(x)\big).
\end{gather*}
Now,
\begin{equation*}
  d_j D^{m} u(x)=D^{S_j(m)} u(x) P E,
\end{equation*}  
where $P$ is a permutation matrix corresponding to the transposition of
$j$ and $\sum_{l\leq k} m_l$, with $\sum_{l< k} m_l < j \leq
\sum_{l\leq k} m_l$ and $E$ is a diagonal matrix with $k+1$
in the $\sum_{l\leq k} m_l$ entry and 1 in the rest.

 By the symmetry of $f^{(|m|)}$, if  $\sum_{l< k} m_l < j \leq
\sum_{l\leq k} m_l$
\begin{equation*}
  f^{(|m|)}(u(x)) d_jD^m u(x)=
  (k+1)
  f^{(|S_{k}(m)|}(u(x)) D^{S_{k}(m)} u(x),
\end{equation*}
therefore, collecting identical terms,
\begin{multline*}
    \frac{d^{s+1}f(u(x))}{dx^{s+1}} =
    \sum_{m\in\mathcal{P}_s} \mn \big(f^{(|S_0 (m)|)}(u(x)) D^{S_0
                                      (m)}u(x)\\
  +
    \sum_{j=1}^{n}f^{(|m|)}(u(x)) d_jD^{S_j (m)} u(x)\big)
  \end{multline*}
  can be written as
\begin{equation}\label{eq:4}
  \begin{aligned}
        \frac{d^{s+1}f(u(x))}{dx^{s+1}} &=
    \sum_{m\in\mathcal{P}_s} \mn \big(f^{(|S_0 (m)|)}(u(x)) D^{S_0
      (m)}u(x)\\
    &+
    \sum_{k=1}^{n}  m_k  (k+1)   f^{(|S_{k}(m)|)}(u(x)) D^{S_{k}(m)} u(x)\big),
  \end{aligned}
\end{equation}
where we point out that in the last expression
  the only terms that actually appear are those for which
  $m_k>0$. Since $m_k-1=(S_{k}(m))_{k}$, by collecting the terms for $m,
  k$ such that $S_k(m)=\widehat m$, \eqref{eq:4} can be written as
\begin{align}\label{eq:5}
  \frac{d^{s+1}f(u(x))}{dx^{s+1}} =
  \sum_{\widehat m\in\mathcal{P}_{s+1}} 
  a_{\widehat m} f^{(|\widehat m|)}(u(x)) D^{\widehat m} u(x),
  \end{align}
  where
  \begin{equation}\label{eq:452}
      a_{\widehat m}=
\begin{cases}
\widetilde{a_{\widehat m}} & \mbox{if } \widehat{m_1}=0 \\
\widetilde{a_{\widehat m}}+\left[
    \begin{array}{c}
      s\\
      S_0^{-1}(\widehat{m})
    \end{array}      
    \right] & \mbox{if } \widehat{m_1}\neq0,
\end{cases}
              \quad
    \widetilde{a_{\widehat m}}=\sum_{\scriptsize\begin{array}{c}\widehat m=S_k (m), \\ k\in\{1,\dots,s\},\\
    m\in\mathcal{P}_{s,k}
  \end{array}
} \mn  m_k  (k+1). 
\end{equation}
For $k\in\{1,\dots,s\}$, and   $ m\in\mathcal{P}_{s,k}$, such that $\widehat m=S_k
(m)$, i.e., $\widehat m_i=m_i$, $i\neq k, k+1$, $\widehat m_{k}=m_{k}-1$,
$\widehat m_{k+1}=m_{k+1}+1$,  we deduce:
\begin{align*}
\mn  m_k  (k+1)&= \frac{s!}{m_1!\dots (m_{k}-1)! m_{k+1}!\dots
  m_s!}(k+1)\\
&=
\frac{s!}{\widehat m_1!\dots \widehat m_{k}! (\widehat m_{k+1}-1)!\dots
  \widehat m_s!}(k+1)\\
&=
\frac{s!}{\widehat m_1!\dots \widehat m_{k}! \widehat m_{k+1}!\dots \widehat m_s!}\widehat
m_{k+1}(k+1).
\end{align*}
Let $\widehat m=S_k (m)$ with  $k<s$, then  one has
$\widehat m_{s+1}=0$.  The only element $m\in\mathcal{P}_{s,s}$ is
$(0,\dots,0,1)\in\mathbb N^s$ and $S_s(m)=(0,\dots,0,1)\in\mathbb N^{s+1}$.
Therefore 
  \begin{align}
    \notag
    \widetilde{a_{\widehat m}}&=\frac{s!}{\widehat m_1!\dots \widehat  m_{s+1}!}\sum_{\scriptsize\begin{array}{c}\widehat m=S_k (m), \\ k\in\{1,\dots,s\},\\
    m\in\mathcal{P}_{s,k}
  \end{array}
} \widehat m_{k+1}(k+1)\\
    \label{eq:450}
    \widetilde{a_{\widehat m}}
&=\frac{s!}{\widehat m_1!\dots \widehat m_{s+1}!}\sum_{k=1}^{s} \widehat
m_{k+1}(k+1)
=\frac{s!}{\widehat m_1!\dots \widehat m_{s+1}!}\sum_{k=2}^{s+1} \widehat m_{k}k.
\end{align}

 On the other hand, if $\widehat{m}_1\neq0$, then:
 \begin{equation}\label{eq:451}
   \left[
    \begin{array}{c}
      s\\
      S_0^{-1}(\widehat{m})
    \end{array}      
    \right]
    =\frac{s!}{(\widehat{m}_1-1)!\widehat{m}_2!\cdots\widehat{m}_s!}
    =\frac{s!}{\widehat{m}_1!\widehat{m}_2!\cdots\widehat{m}_s!
      \widehat{m}_{s+1}!}\widehat{m}_1,
  \end{equation}
  where the last equality holds since, as before, we have
 $\widehat{m}_{s+1}=0$. Then, regardless of $\widehat{m}_1$, \eqref{eq:450} and
 \eqref{eq:451} yield for $\widehat m\in\mathcal{P}_{s+1}$
  \begin{equation}\label{eq:453}
    a_{\widehat m}=\frac{s!}{\widehat m_1!\dots \widehat m_{s+1}!}\sum_{k=1}^{s+1} \widehat
    m_{k}k=\frac{s!}{\widehat m_1!\dots\widehat m_{s+1}!}(s+1)=\mna{s+1}{\widehat m},
  \end{equation}
  since $\widehat m\in\mathcal{P}_{s+1}$ means $\sum_{k=1}^{s+1} \widehat
m_{k}k=s+1$. We deduce from \eqref{eq:5}, \eqref{eq:452} and \eqref{eq:453}
that
\begin{align*}
  \frac{d^{s+1}f(u(x))}{dx^{s+1}} =\sum_{\widehat m\in \mathcal{P}_{s+1}}
\mna{s+1}{\widehat m}
    f^{(|\widehat m|)}(u(x)) D^{\widehat m}u(x),
  \end{align*}
  which concludes the proof by induction.
\end{proof}

\section*{Acknowledgments}
 This research was partially
  supported by Spanish MINECO grants    MTM 2011-22741 and MTM 2014-54388-P.

  \section*{References}


\begin{thebibliography}{1}

\bibitem{ABBM11}
{\sc F.~Ar{\`a}ndiga, A.~Baeza, A.~M. Belda, and P.~Mulet}, {\em Analysis of
  {WENO} schemes for full and global accuracy}, SIAM J. Numer. Anal., 49
  (2011), pp.~893--915.

\bibitem{Chin15}
{\sc S.~A. Chin}, {\em A unified derivation of finite-difference schemes from solution matching},
Numerical Methods for Partial Differential Equations, 32 (2016), pp.~ 243--265.

\bibitem{faadibruno1857}
{\sc Fa\`a di Bruno, C.F.}, {\em Note sur un nouvelle formule de calcul diff\'erentiel},
Quart. J. Math., 1 (1857), pp.~359--360.

\bibitem{GottliebShuTadmor2001}
{\sc S.~Gottlieb, C.-W. Shu, and E.~Tadmor}, {\em Strong stability-preserving
  high-order time discretization methods}, SIAM Rev., 43 (2001), pp.~89--112.

\bibitem{JeltschStrack85}
{\sc R.~Jeltsch and K.-G. Strack}, {\em Accuracy bounds for semidiscretizations
  of hyperbolic problems}, Math. Comp., 45 (1985), pp.~365--376.

\bibitem{JS96}
{\sc G.-S. Jiang and C.-W. Shu}, {\em Efficient implementation of weighted
  {ENO} schemes}, J. Comput. Phys., 126 (1996), pp.~202--228.

\bibitem{Leveque07}
{\sc R.~J. LeVeque}, {\em Finite difference methods for ordinary and partial
  differential equations}, Society for Industrial and Applied Mathematics
  (SIAM), Philadelphia, PA, 2007.
\newblock Steady-state and time-dependent problems.

\bibitem{LOC94}
{\sc X.-D. Liu, S.~Osher, and T.~Chan}, {\em Weighted essentially
  non-oscillatory schemes}, J. Comput. Phys., 115 (1994), pp.~200--212.

\bibitem{Shu09}
{\sc C.-W. Shu}, {\em High order weighted essentially nonoscillatory schemes
  for convection dominated problems}, SIAM Rev., 51 (2009), pp.~82--126.

\end{thebibliography}
\end{document}